\documentclass{mathscan}
\usepackage{tikz,amssymb}

\newcommand{\Gu}{G^{(0)}}

\newcommand{\inv}{^{-1}}
\newcommand{\lsp}{\operatorname{span}}
\newcommand{\clsp}{\overline{\lsp}}
\newcommand{\supp}{\operatorname{supp}}

\newcommand{\Hh}{\mathcal{H}}
\newcommand{\Kk}{\mathcal{K}}
\newcommand{\Ll}{\mathcal{L}}
\newcommand{\Oo}{\mathcal{O}}

\newcommand{\TT}{\mathbb{T}}
\newcommand{\ZZ}{\mathbb{Z}}

\newtheorem{theorem}{Theorem}
\newtheorem{lemma}[theorem]{Lemma}
\newtheorem{corollary}[theorem]{Corollary}

\theoremstyle{definition}

\newtheorem{notation}[theorem]{Notation}
\theoremstyle{remark}

\newtheorem{rmk}[theorem]{Remark}
\newtheorem{rmks}[theorem]{Remarks}

\hyphenation{Weier-strass}

\title{Groupoid algebras as Cuntz-Pimsner algebras}

\author{Adam Rennie}
\author{David Robertson}
\author{Aidan Sims}

\email{renniea@uow.edu.au, droberts@uow.edu.au, asims@uow.edu.au}
\address{School of Mathematics and Applied Statistics\\
University of Wollongong\\
Wollongong\\ NSW\\ 2522\\
AUSTRALIA}

\subjclass[2010]{46L05}
\keywords{Groupoid; Cuntz-Pimsner; $C^*$-correspondence; Hilbert module}
\thanks{This research was supported by the Australian Research Council.}

\begin{document}

\begin{abstract}
We show that if $G$ is a second countable locally compact Hausdorff \'etale groupoid
carrying a suitable cocycle $c:G\to\ZZ$, then the reduced $C^*$-algebra of $G$
can be realised naturally as the Cuntz-Pimsner algebra of a correspondence over the
reduced $C^*$-algebra of the kernel $G_0$ of $c$. If the full and reduced $C^*$-algebras
of $G_0$ coincide, we deduce that the full and reduced $C^*$-algebras of $G$ coincide. We
obtain a six-term exact sequence describing the $K$-theory of $C^*_r(G)$ in terms of that
of $C^*_r(G_0)$.
\end{abstract}

\maketitle

In this short note we provide a sufficient condition for a groupoid $C^*$-algebra to have a
natural realisation as a Cuntz-Pimsner algebra. The advantage to knowing this is the
complementary knowledge one obtains from the two descriptions.

Our main result starts with a second-countable locally compact Hausdorff \'etale
groupoid $G$ with a continuous cocycle into the integers which is unperforated in the
sense that $G$ is generated by $c^{-1}(1)$. We then show that
the reduced groupoid $C^*$-algebra $C^*_r(G)$ is the Cuntz-Pimsner algebra of a
natural $C^*$-correspondence over the reduced $C^*$-algebra $C^*_r(c^{-1}(0))$ of the
kernel of $c$.

We also show that if $C^*(c^{-1}(0))$ and $C^*_r(c^{-1}(0))$ coincide, then $C^*(G)$ and
$C^*_r(G)$ coincide as well. We finish by applying results of Katsura to present a
six-term exact sequence relating the $K$-theory of $C^*_r(G)$ to that of
$C^*_r(c^{-1}(0))$.

\smallskip

\begin{notation}\label{ntn:standing}
For the duration of the paper, we fix a second-countable locally compact Hausdorff
\'etale groupoid $G$ with unit space $G^{(0)}$ and a continuous cocycle $c : G \to \ZZ$;
that is, a map satisfying $c(\gamma_1\gamma_2) = c(\gamma_1) + c(\gamma_2)$ for
composable $\gamma_1$ and $\gamma_2$. We suppose that $c$ is \emph{unperforated} in the
sense that if $c(\gamma) = n > 0$, then there exist composable $\gamma_1, \dots,
\gamma_n$ such that each $c(\gamma_i) = 1$ and $\gamma = \gamma_1 \dots \gamma_n$. For
$n\in\ZZ$ we write $G_n := c^{-1}(n)$. Recall from \cite{Renault1980} that for $u \in
G^{(0)}$, we write $G_u = s^{-1}(u)$ and $G^u = r^{-1}(u)$. The convolution
multiplication of functions is denoted by juxtaposition, as is multiplication in the
$C^*$-completions.
\end{notation}

\begin{rmks}
(i) Observe that $\Gu \subseteq G_0$, and since $c$ is continuous, $G_0$ is a clopen
subgroupoid of $G$.

(ii) If $c$ is strongly surjective in the sense that $c(r^{-1}(u)) = \ZZ$ for all $u \in \Gu$
\cite[Definition~5.3.7]{A-DR}, and $\gamma \in G_n$, then there exists $\alpha \in
r^{-1}(r(\gamma)) \cap G_1$, and then $\gamma = \alpha (\alpha^{-1}\gamma) \in G_1
G_{n-1}$. So an induction shows that if $c$ is strongly surjective then it is unperforated.
\end{rmks}

\begin{lemma}\label{lem:A0=C*r}
Let $A_0 \subseteq C^*_r(G)$ be the completion of $\{f \in C_c(G) : \supp(f) \subseteq
G_0\}$. Then there is an isomorphism $I_0 : C^*_r(G_0) \to A_0$ extending the identity
map on $C_c(G_0)$.
\end{lemma}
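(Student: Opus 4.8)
The plan is to identify the relevant dense $*$-subalgebras first and then show that the identity map between them is isometric for the two reduced norms. Since $c$ is a cocycle, $c(\alpha\beta)=c(\alpha)+c(\beta)$ whenever $\alpha,\beta$ are composable, so $G_0$ is closed under multiplication and inversion; being moreover clopen by Remark~(i), extension-by-zero and restriction identify $C_c(G_0)$ with $\{f\in C_c(G):\supp f\subseteq G_0\}$ as $*$-algebras. Indeed, if $\supp f,\supp g\subseteq G_0$ and $(fg)(\gamma)=\sum_{\alpha\beta=\gamma}f(\alpha)g(\beta)\neq 0$, then $\alpha,\beta\in G_0$ for some decomposition $\alpha\beta=\gamma$, whence $c(\gamma)=0$ and $\supp(fg)\subseteq G_0$. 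This gives a $*$-isomorphism $I_0$ from $C_c(G_0)$ onto the dense subalgebra $\{f:\supp f\subseteq G_0\}$ of $A_0$, and it remains to verify $\|I_0(f)\|_{C^*_r(G)}=\|f\|_{C^*_r(G_0)}$ for $f\in C_c(G_0)$.

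For this I would compare regular representations. For $u\in\Gu$ the cocycle partitions $G_u=\bigsqcup_{n\in\ZZ}(G_u\cap G_n)$, giving an orthogonal decomposition $\ell^2(G_u)=\bigoplus_{n\in\ZZ}\ell^2(G_u\cap G_n)$. If $\supp f\subseteq G_0$ and $\beta\in G_u\cap G_m$, then $(\pi_u(f)\delta_\beta)(\gamma)=f(\gamma\beta\inv)$ vanishes unless $\gamma\beta\inv\in G_0$, i.e. unless $c(\gamma)=m$; hence $\pi_u(f)$ leaves each summand $\ell^2(G_u\cap G_m)$ invariant. Writing $\pi_u^{(m)}(f)$ for the restriction, the $m=0$ block is exactly the regular representation $\pi_u^{G_0}(f)$ of $C_c(G_0)$ on $\ell^2((G_0)_u)$, so $\|\pi_u^{G_0}(f)\|\le\|\pi_u(f)\|$ and taking suprema over $u$ yields $\|f\|_{C^*_r(G_0)}\le\|I_0(f)\|_{C^*_r(G)}$ immediately.

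The substance of the lemma is the reverse inequality, which amounts to showing that the remaining blocks $\pi_u^{(m)}(f)$ contribute nothing new. Fixing $m$ with $G_u\cap G_m\neq\emptyset$, I would choose $\gamma_0\in G_u\cap G_m$ and set $v:=r(\gamma_0)\in\Gu$. The map $\gamma\mapsto\gamma\gamma_0\inv$ is a bijection of $G_u\cap G_m$ onto $(G_0)_v=s^{-1}(v)\cap G_0$, with inverse $\delta\mapsto\delta\gamma_0$, and the induced unitary $U:\ell^2(G_u\cap G_m)\to\ell^2((G_0)_v)$ intertwines $\pi_u^{(m)}(f)$ with $\pi_v^{G_0}(f)$: putting $\delta=\gamma\gamma_0\inv$ and $\epsilon=\beta\gamma_0\inv$ turns the kernel $f(\gamma\beta\inv)$ into $f(\delta\epsilon\inv)$, which is the kernel of $\pi_v^{G_0}(f)$. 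Hence $\|\pi_u^{(m)}(f)\|=\|\pi_v^{G_0}(f)\|\le\|f\|_{C^*_r(G_0)}$ for every $m$, so $\|\pi_u(f)\|=\sup_m\|\pi_u^{(m)}(f)\|\le\|f\|_{C^*_r(G_0)}$, and taking suprema over $u$ completes the isometry; $I_0$ then extends by continuity to the desired isomorphism $C^*_r(G_0)\to A_0$.

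The main obstacle is precisely this last intertwining step: one must recognise that every $c$-graded block of the regular representation of $G$ is unitarily equivalent to a genuine regular representation of the subgroupoid $G_0$, which is what renders the extra blocks harmless. Checking that $\gamma\mapsto\gamma\gamma_0\inv$ is a bijection and conjugates the kernels correctly is routine once the right base point $v=r(\gamma_0)$ is isolated; the only point needing care is the case $G_u\cap G_m=\emptyset$, where the block is simply zero and may be discarded.
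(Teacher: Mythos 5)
Your proof is correct, but the essential half of it runs along a genuinely different line from the paper's. The easy inequality is handled the same way in both: compressing $\pi_u(f)$ to the reducing subspace $\ell^2((G_0)_u)$ (your $m=0$ block) gives $\|f\|_{C^*_r(G_0)}\le\|f\|_{C^*_r(G)}$, hence a norm-decreasing homomorphism $\pi\colon A_0\to C^*_r(G_0)$ extending the identity on $C_c(G_0)$. For the reverse inequality the paper never returns to the regular representations: it intertwines $\pi$ with the faithful conditional expectations $C^*_r(G)\to C_0(G^{(0)})$ and $C^*_r(G_0)\to C_0(G^{(0)})$ induced by restriction of functions (faithfulness is established in a footnote using Renault's result that elements of $C^*_r(G)$ are $C_0$-functions), and then quotes a standard argument (Lemma~3.13 of the cited Sims--Whitehead--Whittaker preprint) to conclude that $\pi$ is injective, hence isometric. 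You instead prove the reverse norm inequality directly: for $f$ supported in $G_0$, the operator $\pi_u(f)$ is block diagonal for the grading $\ell^2(G_u)=\bigoplus_m\ell^2(G_u\cap G_m)$, and right translation by $\gamma_0^{-1}$ identifies the $m$-th block unitarily with the genuine regular representation $\pi^{G_0}_{r(\gamma_0)}$ of $G_0$; your kernel computation $(\gamma\gamma_0^{-1})(\beta\gamma_0^{-1})^{-1}=\gamma\beta^{-1}$ is exactly the point, and it works because $G_u\cap G_m$ is precisely the coset $G_0\gamma_0$. Your route is more elementary and self-contained: it avoids conditional expectations entirely, and it explains structurally why nothing is lost in passing to $G_0$ --- every graded block of every regular representation of $G$ is a regular representation of $G_0$ in disguise. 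It also generalizes essentially verbatim to any clopen subgroupoid $H\subseteq G$ with $H^{(0)}=G^{(0)}$, with cosets $H\gamma_0$ replacing the level sets $G_u\cap G_m$. What the paper's argument buys is brevity on the page, given the cited lemmas, and a faithful expectation that is a useful structural tool in its own right (and is reused implicitly elsewhere); what yours buys is a transparent, purely representation-theoretic proof with no external machinery.
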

\begin{proof}
Fix $u \in \Gu$, let $\pi^0_u$ be the regular representation of $C^*(G_0)$ on $\Hh^0_u :=
\ell^2((G_0)_u)$ and let $\pi_u$ be the regular representation of $C^*(G)$ on $\Hh_u :=
\ell^2(G_u)$. For $a \in C_c(G_0)$, the subspace $\Hh^0_u \subseteq \Hh_u$ is reducing
for the operator $\pi_u(a)$. Let $P_0 : \Hh_u \to \Hh^0_u$ be the orthonormal projection.
We have
\[
\|\pi^0_u(a)\|
    = \|P_0 \pi_u(a) P_0\|
    \le \|\pi_u(a)\|.
\]
Taking the supremum over all $u$, we deduce that $\|a\|_{C^*_r(G_0)} \le
\|a\|_{C^*_r(G)}$. So there is a $C^*$-homomorphism $\pi : A_0 \to C^*_r(G_0)$ extending
the identity map on $C_c(G_0)$.

The restriction map from $C_c(G)$ to $C_c(\Gu)$ extends to a faithful conditional
expectation\footnote[2]{To see this observe that for $f \in C_c(G)$, we have $f(u) =
(\pi_u(f) \delta_u | \delta_u)$, giving $\|f|_{\Gu}\| = \sup_u (\pi_u(f) \delta_u |
\delta_u) \le \sup_u \|\pi_u(f)\| = \|f\|_{C^*_r(G)}$, so restriction induces an
idempotent map $\Psi : C^*_r(G) \to C_0(\Gu)$ of norm one. Hence
\cite[Theorem~II.6.10.2]{Blackadar} implies that $\Psi$ is a conditional expectation.
Given $a \in C^*_r(G) \setminus \{0\}$,
 \cite[Proposition~II.4.2]{Renault1980} shows that $a \in C_0(G)$, so $a(\gamma) \not=
0$ for some $\gamma$.  Then $\Psi(a^*a)(s(\gamma)) = \sum_{\alpha \in
G_{s(\gamma)}}\overline{a}(\alpha)a(\alpha) \ge |a(\gamma)|^2 > 0$, so $\Psi$ is
faithful.}
$\Psi : C^*_r(G) \to C_0(\Gu)$ and a faithful conditional expectation $\Psi_0 :
C^*_r(G_0) \to C_0(\Gu_0) = C_0(\Gu)$. We clearly have $\pi \circ \Psi = \Psi_0 \circ
\pi$, and so a standard argument (see \cite[Lemma~3.13]{SimsWhiteheadEtAl:xx13}) shows
that $\pi$ is faithful and hence isometric. It follows immediately that $I_0 := \pi^{-1}
: C^*_r(G_0) \to A_0$ is an isometric embedding of $C^*_r(G_0)$ in $C^*_r(G)$ as claimed.
\end{proof}

\begin{lemma}\label{lem:X a hilbert module}
Let $X(G)$ be the completion in $C^*_r(G)$ of the subspace $\{f \in C_c(G) : \supp(f)
\subseteq G_1\}$. Then $X(G)$ is a $C^*$-correspondence over $C^*_r(G_0)$ with module
actions $a \cdot \xi = I_0(a)\xi$ and $\xi \cdot a = \xi I_0(a)$ and inner product
$\langle\cdot, \cdot\rangle_{C^*_r(G_0)} : X(G) \times X(G) \to C^*_r(G)$ given by
$\langle \xi, \eta\rangle_{C^*_r(G_0)} = I^{-1}_0(\xi^* \eta)$.
\end{lemma}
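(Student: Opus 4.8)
The plan is to verify the three defining properties of a $C^*$-correspondence: that $X(G)$ is closed under the right action, that the proposed inner product is $C^*_r(G_0)$-valued, positive, and satisfies the compatibility axioms, and that the left action is by adjointable operators. The key structural observation driving all of this is that multiplication in $C^*_r(G)$ respects the grading induced by the cocycle: if $\supp(f)\subseteq G_m$ and $\supp(g)\subseteq G_n$, then $\supp(fg)\subseteq G_{m+n}$, because $c$ is a cocycle (so $c(\alpha\beta)=c(\alpha)+c(\beta)$ on composable pairs) and convolution of $f$ and $g$ is supported on products $\alpha\beta$ with $\alpha\in\supp(f)$, $\beta\in\supp(g)$. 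This grading statement passes to the completions by continuity, and it is the workhorse behind every claim.

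First I would establish that the formulas make sense and land in the right algebras. For $\xi,\eta\in X(G)$ with supports in $G_1$, the adjoint $\xi^*$ is supported in $G_{-1}$, so by the grading observation $\xi^*\eta$ is supported in $G_0$, hence lies in the subalgebra $A_0$ of Lemma~\ref{lem:A0=C*r}; applying $I_0^{-1}$ therefore produces a genuine element of $C^*_r(G_0)$, so $\langle\xi,\eta\rangle_{C^*_r(G_0)}$ is well-defined. Similarly, for $a\in C^*_r(G_0)$, the element $I_0(a)$ lies in $A_0$ (support in $G_0$), so $\xi I_0(a)$ and $I_0(a)\xi$ are both supported in $G_1$ and hence lie in $X(G)$; this shows the left and right actions genuinely map into $X(G)$. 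That $X(G)$ is a right Hilbert module is then largely formal: conjugate-symmetry $\langle\xi,\eta\rangle^*=\langle\eta,\xi\rangle$ and the module property $\langle\xi,\eta\cdot a\rangle=\langle\xi,\eta\rangle a$ follow from the corresponding identities for the involution and multiplication in $C^*_r(G)$ together with the fact that $I_0$ is a $*$-isomorphism onto $A_0$.

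The one genuinely substantive point—and the step I expect to be the main obstacle—is positivity of the inner product, namely that $\langle\xi,\xi\rangle_{C^*_r(G_0)}\geq 0$ in $C^*_r(G_0)$ and that this pairing is definite enough to give completeness. Here I would exploit that $I_0$ is an \emph{isometric} $*$-isomorphism: positivity of $I_0^{-1}(\xi^*\xi)$ in $C^*_r(G_0)$ is equivalent to positivity of $\xi^*\xi$ in $A_0$, and since $A_0$ is a $C^*$-subalgebra of $C^*_r(G)$, this is just the statement that $\xi^*\xi\geq 0$ in $C^*_r(G)$, which is automatic because $\xi^*\xi$ is manifestly a positive element of a $C^*$-algebra. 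The completeness of $X(G)$ in the inner-product norm is immediate because $X(G)$ is by definition closed in $C^*_r(G)$ and, by the isometry of $I_0$, the norm $\|\langle\xi,\xi\rangle\|_{C^*_r(G_0)}^{1/2}=\|\xi^*\xi\|_{C^*_r(G)}^{1/2}=\|\xi\|_{C^*_r(G)}$ coincides with the ambient norm.

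Finally, for the left action I would check that $a\mapsto(\xi\mapsto I_0(a)\xi)$ is a $*$-homomorphism into the adjointable operators, with adjoint implemented by $a^*$; the relevant identity $\langle I_0(a)\xi,\eta\rangle=\langle\xi,I_0(a^*)\eta\rangle$ reduces, after applying $I_0$, to associativity and the involution identity $(I_0(a)\xi)^*\eta=\xi^*I_0(a^*)\eta$ in $C^*_r(G)$, again using that $I_0(a)^*=I_0(a^*)$. Boundedness of each left-action operator follows from submultiplicativity of the $C^*$-norm. Assembling these pieces yields that $X(G)$ is a $C^*$-correspondence over $C^*_r(G_0)$ as claimed.
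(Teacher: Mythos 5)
Your proposal is correct and follows essentially the same route as the paper: the cocycle-induced grading of convolution and involution (passed to completions by continuity), the isometry of $I_0$ from Lemma~\ref{lem:A0=C*r} combined with the $C^*$-identity to get positivity, definiteness and completeness, and the computation $(a\xi)^*\eta = \xi^*(a^*\eta)$ for adjointability of the left action. The paper's proof is simply a compressed version of what you wrote.
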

\begin{proof}
Since $c$ is a cocycle, we have $G_1^{-1} = G_{-1}$. That $c$ is a cocycle also implies
that if $f \in C_c(G_m)$ and $g \in C_c(G_n)$ then $fg \in C_c(G_{m+n})$ and $f^* \in
C_c(G_{-m})$, and it follows that $C_c(G_1) C_c(G_0), C_c(G_0) C_c(G_1) \subseteq
C_c(G_1)$ and that $C_c(G_1)^* C_c(G_1) \subseteq C_c(G_0)$. The $C^*$-identity and
Lemma~\ref{lem:A0=C*r} show that $\langle\cdot, \cdot \rangle_{C^*_r(G)}$ is isometric in
the sense that $\|\langle \xi, \xi\rangle_{C^*_r(G_0)}\| = \|\xi\|^2_{C^*_r(G)}$, and in
particular is positive definite. Continuity therefore guarantees that the operations
described determine a Hilbert-module structure on $X(G)$. The left action of $C^*_r(G_0)$
on $X(G)$ is adjointable since $\langle a \cdot \xi, \eta\rangle_{C^*_r(G)} =
(a\xi)^*\eta = \xi^*(a^*\eta) = \langle \xi, a^*\cdot \eta\rangle_{C^*_r(G)}$.
\end{proof}

\begin{notation}\label{ntn:XGinc}
We write $I_1$ for the inclusion map $X(G) \hookrightarrow C^*_r(G)$.
\end{notation}

Recall from \cite{Pimsner} that a Toeplitz representation $(\psi, \pi)$ of a
$C^*$-correspondence $X$ over a $C^*$-algebra $A$ in a $C^*$-algebra $B$ consists of a
homomorphism $\pi : A \to B$ and a linear map $\psi : X \to B$ such that $\pi(a)\psi(x) =
\psi(a\cdot x)$, $\psi(x)\pi(a) = \psi(x\cdot a)$ and $\pi(\langle x,y\rangle_A) =
\psi(x)^*\psi(y)$ for all $x,y \in X$ and $a \in A$.

\begin{lemma}\label{lem:toeplitz rep}
The embeddings $I_1 : X(G) \hookrightarrow C^*_r(G)$ and $I_0 : C^*_r(G_0)
\hookrightarrow C^*_r(G)$ form a Toeplitz representation $(I_1, I_0)$ of $X(G)$ in
$C^*_r(G)$.
\end{lemma}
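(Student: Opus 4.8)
The plan is to verify the defining axioms of a Toeplitz representation directly against the structure set up in Lemma~\ref{lem:X a hilbert module}. Recall that a Toeplitz representation of $X(G)$ in $C^*_r(G)$ is a pair $(\psi, \pi)$ in which $\pi : C^*_r(G_0) \to C^*_r(G)$ is a $*$-homomorphism, $\psi : X(G) \to C^*_r(G)$ is linear, and the two satisfy $\psi(\xi)\pi(a) = \psi(\xi \cdot a)$ and $\pi(\langle \xi, \eta\rangle_{C^*_r(G_0)}) = \psi(\xi)^*\psi(\eta)$ for all $\xi, \eta \in X(G)$ and $a \in C^*_r(G_0)$. The observation driving the whole argument is that, since $I_0$ and $I_1$ are concretely inclusion maps into $C^*_r(G)$, the inner product and module actions on $X(G)$ were defined in the previous lemmas precisely so that these identities become tautologies.

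First I would record the structural requirements on the two maps. That $\pi := I_0$ is a $*$-homomorphism is exactly Lemma~\ref{lem:A0=C*r}, where $I_0$ is shown to be an isomorphism of $C^*_r(G_0)$ onto $A_0 \subseteq C^*_r(G)$; and $\psi := I_1$ is linear since it is the inclusion of a subspace. Then I would check the inner-product axiom: because $I_1$ is an inclusion we have $I_1(\xi)^* I_1(\eta) = \xi^*\eta$ in $C^*_r(G)$, while the inner product of Lemma~\ref{lem:X a hilbert module} is $\langle \xi, \eta\rangle_{C^*_r(G_0)} = I_0^{-1}(\xi^*\eta)$, so that $I_0(\langle \xi, \eta\rangle_{C^*_r(G_0)}) = \xi^*\eta$ as well, giving the required equality.

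Next I would dispatch the module-action axioms in the same way. Since $\xi \cdot a = \xi I_0(a)$ by definition and $I_1$ is the inclusion, $I_1(\xi \cdot a) = \xi I_0(a) = I_1(\xi) I_0(a)$; and from $a \cdot \xi = I_0(a)\xi$ one likewise gets $I_1(a \cdot \xi) = I_0(a)\xi = I_0(a) I_1(\xi)$, so that the left-module compatibility holds directly (it also follows formally from the inner-product and right-action identities).

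There is no genuine obstacle here: the work has already been done in Lemmas~\ref{lem:A0=C*r} and~\ref{lem:X a hilbert module}, which were arranged so that every Toeplitz axiom collapses to an identity in $C^*_r(G)$ that holds by construction. The only mild point to confirm is that $\xi^*\eta$ lands in $A_0 = I_0(C^*_r(G_0))$, so that $I_0^{-1}(\xi^*\eta)$ is defined; but this is exactly the containment $C_c(G_1)^* C_c(G_1) \subseteq C_c(G_0)$ noted in the proof of Lemma~\ref{lem:X a hilbert module}, passed to the completions by continuity.
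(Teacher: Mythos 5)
Your proof is correct and follows essentially the same route as the paper: both arguments observe that, since $I_0$ and $I_1$ are inclusions and the module actions and inner product on $X(G)$ were defined through $I_0$, the Toeplitz identities $I_1(\xi)I_0(a) = I_1(\xi\cdot a)$ and $I_0(\langle\xi,\eta\rangle_{C^*_r(G_0)}) = I_1(\xi)^*I_1(\eta)$ hold by construction. Your extra remarks (linearity of $I_1$, that $\xi^*\eta$ lies in $A_0$, and the left-action compatibility) are sound but just make explicit what the paper leaves implicit.
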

\begin{proof}
For $\xi \in X(G)$ and $a \in C^*_r(G_0)$, we have $I_0(a)I_1(\xi) = a \xi = I_1(a
\cdot\xi)$ by definition of the actions on $X(G)$. For $\xi,\eta \in C_c(G_1)$, we have
\[
I_0(\langle \xi,\eta \rangle_{A_0})
    = I_0(\xi^*  \eta)
    = \xi^*  \eta
    = I_1(\xi)^* I_1(\eta).
\]
So $(I_1, I_0)$ is a Toeplitz representation as claimed.
\end{proof}

We aim to show that $(I_1, I_0)$ is in fact a Cuntz-Pimsner covariant representation, but
we have a little work to do first.

\begin{lemma} \label{lemma:densefunctions}
For each integer $n > 0$, the space
\[
 \lsp\{f_1  f_2  \cdots  f_n : f_i \in C_c(G_1)\text{ for all } i\}
\]
is dense in $C_c(G_n)$ in both the uniform norm and the bimodule norm of Lemma~\ref{lem:X
a hilbert module}, and we have $C_c(G_n)^* = C_c(G_{-n})$.
\end{lemma}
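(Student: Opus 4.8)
=== PROOF PROPOSAL ===

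The plan is to prove the two assertions separately, handling the density claim via the unperforation hypothesis and the adjoint claim directly from the cocycle property.

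First I would dispose of the easier identity $C_c(G_n)^* = C_c(G_{-n})$. Since $c$ is a cocycle, $c(\gamma^{-1}) = -c(\gamma)$, so $\gamma \in G_n$ if and only if $\gamma^{-1} \in G_{-n}$; that is, inversion maps $G_n$ homeomorphically onto $G_{-n}$. For $f \in C_c(G)$ the involution is $f^*(\gamma) = \overline{f(\gamma^{-1})}$, so $\supp(f) \subseteq G_n$ exactly when $\supp(f^*) \subseteq G_{-n}$. This gives $C_c(G_n)^* = C_c(G_{-n})$ immediately, with essentially no computation.

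The substance is the density statement. The key point is that $G_n$ is covered by open bisections on which a product of $n$ functions from $C_c(G_1)$ is supported. The plan is: fix $f \in C_c(G_n)$ and $\varepsilon > 0$. Using the unperforation hypothesis, each $\gamma \in G_n$ factors as $\gamma = \gamma_1 \cdots \gamma_n$ with each $\gamma_i \in G_1$. Since $G$ is \'etale, I would choose, for each such $\gamma$, open bisections $U_1, \dots, U_n \subseteq G_1$ with $\gamma_i \in U_i$ and such that the composition map makes $U_1 \cdots U_n$ an open bisection in $G_n$ containing $\gamma$; these sets cover the compact support of $f$, so finitely many suffice. On each such product bisection, a single convolution product $g_1 g_2 \cdots g_n$ of functions $g_i \in C_c(U_i)$ is again supported in a bisection, and by adjusting the $g_i$ one can match any prescribed continuous function supported there (the convolution of functions supported on composable bisections is, up to the homeomorphism given by multiplication, just pointwise multiplication). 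Finally I would glue these local approximations using a partition of unity subordinate to the finite subcover, expressing $f$ as a finite sum of products and controlling the error in the inductive-limit (hence $C^*_r(G)$) topology.

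The main obstacle is making precise the claim that a convolution of functions supported on composable open bisections realises an arbitrary prescribed function on the product bisection, and then assembling the local pieces into a genuine linear combination of length-$n$ products rather than of more general elements. Concretely, the difficulty is that a partition of unity produces a sum $\sum_k h_k f$ where each $h_k f$ is supported in a product bisection, and one must verify that each such piece is itself (approximable by) a single product $g_1^{(k)} \cdots g_n^{(k)}$; this requires solving, on each bisection, the factorisation of a given function through the $n$-fold multiplication map, which is where the \'etale structure and the homeomorphism property of bisection multiplication do the real work. Once that local factorisation is established, the remaining steps are routine.
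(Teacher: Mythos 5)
Your handling of $C_c(G_n)^* = C_c(G_{-n})$ coincides with the paper's; the interest is in the density claim, where you take a genuinely different route. The paper never decomposes a given $f \in C_c(G_n)$ at all: it verifies the separation hypothesis of the Stone--Weierstrass theorem, using unperforation to write a given $x \in G_n$ as $x_1 \cdots x_n$ with $x_i \in G_1$, enclosing each $x_i$ in a precompact open bisection $U_i \subseteq G_1$ (with $U_n$ arranged so that $s(y) \notin s(U_n)$ whenever $s(y) \neq s(x)$), and producing via Urysohn functions $f_i \in C_c(U_i)$ whose convolution product is $1$ at $x$ and $0$ at a prescribed $y \neq x$. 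Your covering/partition-of-unity scheme instead works with $f$ itself, and the obstacle you flag --- factoring a function $h$ supported in a product bisection $W = U_1 \cdots U_n$ as a single convolution product --- can be resolved exactly, not merely approximately: because each $U_i$ is a bisection, the coordinate maps $u_i : W \to U_i$ determined by $w = u_1(w) \cdots u_n(w)$ are homeomorphisms onto open subsets of the $U_i$, and convolution of functions supported in the $U_i$ is pointwise multiplication of pullbacks, i.e.\ $(g_1 \cdots g_n)(w) = \prod_i g_i(u_i(w))$ with support inside $W$; so one may take $g_1 = h \circ u_1^{-1}$ extended by zero (this lies in $C_c(U_1)$ because $u_1(\supp h)$ is compact) and, by Urysohn, $g_i \in C_c(U_i)$ with $g_i \equiv 1$ on $u_i(\supp h)$ for $2 \le i \le n$; then $g_1 g_2 \cdots g_n = h$ on the nose. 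Consequently your approach needs no error control in any topology: the partition of unity writes $f = \sum_k h_k f$ with each piece exactly a product, so you obtain the stronger conclusion that $\lsp\{f_1 \cdots f_n : f_i \in C_c(G_1)\}$ is all of $C_c(G_n)$, not merely dense in it. The trade-off: the paper's argument is shorter and avoids the factorisation lemma entirely, but it delivers density in the uniform norm and tacitly needs the span of products to be stable under the pointwise operations that Stone--Weierstrass requires (a point most easily settled by exactly your factorisation); your argument costs the local factorisation but yields exact equality, independent of the topology placed on $C_c(G_n)$, which is if anything more convenient for the way the lemma is used in Proposition~\ref{prp:main}.
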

\begin{proof}
Proposition~4.1 of \cite{Renault1980} shows that the reduced norm on $C_c(G)$ dominates
the uniform norm. As discussed on \cite[page~82]{Renault1980}, the reduced norm is
dominated by the full norm which in turn is dominated by the $I$-norm by
\cite[Proposition~1.4]{Renault1980}. By its definition, the $I$-norm agrees with the
uniform norm on functions supported on bisections, forcing equality of all four norms on
such functions. Thus, on functions supported on open bisections $U \subseteq G_1$, the
uniform norm and the bimodule norm on $C_c(G_1)$ agree. Thus it suffices to show that
$\lsp\{f_1 f_2 \cdots  f_n : f_i \in C_c(G_1)\text{ for all } i\}$ is dense in $C_c(G_n)$
in the uniform norm. Fix $x \neq y \in G_n$. Since $G_n = c\inv(n)$ is locally compact,
by the Stone-Weierstrass theorem it suffices to construct functions $f_1, \dots, f_n \in
C_c(G_1)$ such that $(f_1 \cdots  f_n)(x) = 1$ and $(f_1 \cdots  f_n)(y) = 0$.

Since $c$ is unperforated, there exist $x_1, \dots, x_n \in G_1$ such that $x = x_1 \cdots x_n$.
Choose a precompact open bisection $U_n \subseteq G_1$ such that $x_n \in U_n$ and
such that if $s(x) \not= s(y)$ then $s(y) \not\in s(U_n)$; this is possible because $s$
is a local homeomorphism. For each $1 \le i \le n-1$ choose a precompact open bisection
$U_i \subseteq G_1$ with $x_i \in U_i$. Then $U_1 \cdots U_n$ is a precompact open
bisection containing $x$ because multiplication in $G$ is continuous and open. If $s(x) =
s(y)$, then $y \not\in U_1 \cdots U_n$ because the latter is a bisection; and if $s(x)
\not= s(y)$ then $y \not\in U_1 \cdots U_n$ by choice of $U_n$.

By Urysohn's Lemma, for each $i$ there exists $f_i \in C_c(U_i)$ with $f_i(x_i) = 1$. Then
$(f_1  \cdots  f_n)(x) = 1$ by construction, and the convolution formula says that
$$
\supp(f_1  \cdots  f_n) = \supp(f_1)\supp(f_2) \cdots \supp(f_n) \subseteq U_1 \cdots
U_n,
$$
which yields $(f_1  \cdots  f_n)(y) = 0$.

The involution formula $f^*(\gamma) = \overline{f}(\gamma^{-1})$ and
the cocycle property $c(\gamma^{-1})
= -c(\gamma)$ show that $f \in C_c(G_n)$ if and only if $f^* \in C_c(G_{-n})$.
\end{proof}

\begin{corollary} \label{corollary:groupoidcompacts}
There is an injection $\psi : \Kk(X(G)) \to C^*_r(G_0)$ such that $\psi(\theta_{\xi,
\eta}) = \xi  \eta^*$ for all $\xi, \eta \in C_c(G_1)$.
\end{corollary}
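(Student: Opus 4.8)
The plan is to build $\psi$ by composing the $*$-homomorphism that the Toeplitz representation $(I_1,I_0)$ induces on $\Kk(X(G))$ with the inverse of the isomorphism $I_0$, and then to verify injectivity by a direct computation exploiting that $I_0$ is faithful. First I would invoke the standard fact from the theory of Toeplitz representations (Pimsner; Fowler--Muhly--Raeburn) that the representation $(I_1,I_0)$ of Lemma~\ref{lem:toeplitz rep} induces a $*$-homomorphism $\psi^{(t)}:\Kk(X(G))\to C^*_r(G)$ determined by $\psi^{(t)}(\theta_{\xi,\eta})=I_1(\xi)I_1(\eta)^*=\xi\eta^*$. This secures well-definedness and boundedness for free, which is why I prefer it to defining $\psi$ by hand on the algebraic span of rank-one operators.

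Next I would check that the image lands in $A_0$. For $\xi,\eta\in C_c(G_1)$, Lemma~\ref{lemma:densefunctions} gives $\eta^*\in C_c(G_{-1})$, and the cocycle property forces $\xi\eta^*\in C_c(G_0)\subseteq A_0$. Since the operators $\theta_{\xi,\eta}$ with $\xi,\eta\in C_c(G_1)$ span a dense subspace of $\Kk(X(G))$ and $A_0$ is closed, continuity yields $\psi^{(t)}(\Kk(X(G)))\subseteq A_0$. I then set $\psi:=I_0^{-1}\circ\psi^{(t)}$, which is a $*$-homomorphism into $C^*_r(G_0)$; since $I_0$ restricts to the identity on $C_c(G_0)$, it satisfies $\psi(\theta_{\xi,\eta})=\xi\eta^*$ as required.

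The main point, injectivity, I would establish directly rather than by citing the general criterion that injectivity of $\pi$ implies injectivity of $\psi^{(t)}$. The key identity is that for all $\xi\in X(G)$ and $T\in\Kk(X(G))$,
\[
I_1(\xi)^*\,\psi^{(t)}(T)\,I_1(\xi)=I_0\big(\langle \xi, T\xi\rangle_{C^*_r(G_0)}\big).
\]
I would verify this first on rank-one operators $T=\theta_{\eta,\zeta}$, expanding $I_1(\xi)^*I_1(\eta)I_1(\zeta)^*I_1(\xi)$ and using the Toeplitz relation $I_1(\xi)^*I_1(\eta)=I_0(\langle\xi,\eta\rangle)$ from Lemma~\ref{lem:toeplitz rep} twice, so that both sides equal $I_0(\langle\xi,\eta\rangle\langle\zeta,\xi\rangle)$; then I extend by linearity and continuity in $T$. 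Granting the identity, suppose $\psi^{(t)}(T)=0$; then $I_0(\langle\xi,T\xi\rangle)=0$ for every $\xi$, and since $I_0$ is injective by Lemma~\ref{lem:A0=C*r} we get $\langle\xi,T\xi\rangle=0$ for all $\xi$, whence $T=0$ by polarisation. As $I_0^{-1}$ is an isomorphism, $\psi$ is injective.

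The only delicate bookkeeping is keeping the two module actions and the inner product straight when expanding $\theta_{\eta,\zeta}\xi=\eta\cdot\langle\zeta,\xi\rangle$ and matching it against the product $\xi\eta^*\zeta$ inside $C^*_r(G)$; once the key identity is in place no genuine obstacle remains, so I expect verifying that identity on rank-one operators to be the crux of the argument.
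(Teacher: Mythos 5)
Your proof is correct, and its first half is exactly the paper's: both invoke Pimsner/Fowler--Raeburn to obtain the homomorphism on $\Kk(X(G))$ determined by $\theta_{\xi,\eta}\mapsto I_1(\xi)I_1(\eta)^* = \xi\eta^*$, note that its range lies in $A_0$ (you spell out the density-plus-continuity argument that the paper leaves implicit), and compose with $I_0^{-1}$. Where you genuinely diverge is the injectivity step. The paper proves the intertwining identity $I_1^{(1)}(S)I_1(\zeta)=I_1(S\zeta)$ on rank-one operators, extends by linearity and continuity, and then from $\psi(T)=0$ concludes $I_1(T\zeta)=0$ for all $\zeta$, hence $T=0$ because $I_1$ is isometric (the module norm on $X(G)$ agrees with the $C^*_r(G)$-norm by Lemma~\ref{lem:X a hilbert module}). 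You instead prove the sandwiched identity $I_1(\xi)^*\,\psi^{(t)}(T)\,I_1(\xi)=I_0\big(\langle\xi,T\xi\rangle_{C^*_r(G_0)}\big)$, deduce $\langle\xi,T\xi\rangle=0$ for all $\xi$ from faithfulness of $I_0$, and finish by polarisation; all of these steps check out, including the rank-one computation where both sides reduce to $I_0(\langle\xi,\eta\rangle\langle\zeta,\xi\rangle)$. Note that your identity follows from the paper's by one further left multiplication by $I_1(\xi)^*$ together with the Toeplitz relation, so the two computations are close relatives: the paper's is one step shorter because isometry of $I_1$ lets it stop at $T\zeta=0$ without polarising, while yours has the mild economy of relying only on injectivity of $I_0$, which you need anyway to define $\psi$. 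Both arguments are complete and correct.
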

\begin{proof}
By the discussion on page~202 of \cite{Pimsner} (see also
\cite[Section~1]{FowlerRaeburn:IUMJ99}), there is a homomorphism $I_1^{(1)} : \Kk(X(G))
\to C^*_r(G)$ such that $I_1^{(1)}(\theta_{\xi, \eta}) = I_1(\xi) I_1(\eta)^* = \xi
\eta^* = I_0(\xi \eta^*)$. Since $I_0$ is isometric, the composition $\psi := I_0^{-1}
\circ I_1^{(1)}$ is a homomorphism satisfying the desired formula, and we need only show
that $\psi$ is injective.

If $T \in \Kk(X(G))$ and $\psi(T) = 0$, then $I_1^{(1)}(T) = 0$ as well. For
$\eta,\xi,\zeta \in X(G)$, we have
\[
I_1^{(1)}(\theta_{\eta, \xi})I_1(\zeta)
    = I_1(\eta) I_1(\xi)^* I_1(\zeta)
    = I_1(\eta \cdot \langle \xi,\zeta\rangle_{A_0})
    = I_1(\theta_{\eta,\xi}(\zeta)),
\]
and linearity and continuity show that for all $S
\in \Kk(X(G))$ and $\zeta \in X(G)$, we have the formula $I_1^{(1)}(S)I_1(\zeta) = I_1(S\zeta)$. In particular, $0 = I_1^{(1)}(T)I_1(\zeta) =
I_1(T\zeta)$ for all $\zeta \in X(G)$. Since $I_1$ is isometric, we deduce that $T\zeta =
0$ for all $\zeta \in C_c(G_1)$, and so $T = 0$. So $\psi$ is injective.
\end{proof}

Let $X$ be a $C^*$-correspondence over a $C^*$-algebra $A$, and write $\phi : A \to
\Ll(X)$ for the homomorphism implementing the left action. Following \cite{K2004}, we say
that a Toeplitz representation $(\psi, \pi)$ of $X$ is \emph{Cuntz-Pimsner covariant} if
the homomorphism $\psi^{(1)}$ of $\Kk(X)$ satisfying $\psi^{(1)}(\theta_{\xi,\eta}) =
\psi(\xi)\psi(\eta)^*$ satisfies $\psi^{(1)} \circ \phi(a) = \pi(a)$ for all $a$ in the
\emph{Katsura ideal} $\phi^{-1}(\Kk(X)) \cap \ker(\phi)^\perp$.

\begin{lemma}\label{lem:Katsura ideal}
Let $\phi = \phi_{X(G)} : C^*_r(G_0) \to \Ll(X(G))$ be the homomorphism implementing the
left action. Then
\begin{enumerate}
\item\label{it:kerphi} $\ker(\phi) = C^*_r(G_0) \cap C_0(\{g \in G_0 : s(g) \not\in
    r(G_1)\})$;
\item\label{it:kerperp} $\ker(\phi)^\perp = C^*_r(G_0) \cap C_0(\{g \in G_0 : s(g)
    \in \overline{r(G_1)}\})$, and $\clsp\{f  g^* : f,g \in C_c(G_1)\} \subseteq
    \ker(\phi)^\perp$ with equality if $r(G_1)$ is closed;
\item\label{it:Katideal} the Katsura ideal $J_{X(G)}$ is
    $$
    J_{X(G)} = \clsp\{f  g^* : f,g \in C_c(G_1)\};
    $$
\item\label{it:CPrep} the Toeplitz representation $(I_1, I_0)$ of
    Lemma~\ref{lem:toeplitz rep} is Cuntz-Pimsner covariant.
\end{enumerate}
\end{lemma}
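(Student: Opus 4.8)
The plan is to handle the four parts in order, using throughout the disintegration $C^*_r(G_0)\subseteq C_0(G_0)$ (Renault) so that elements of $C^*_r(G_0)$ may be evaluated pointwise and the left action computed by convolution. Recall that $\phi(a)\xi=I_0(a)\xi$, that $\Kk(X(G))=\clsp\{\theta_{\xi,\eta}\}$, and that the Katsura ideal is $J_{X(G)}=\phi^{-1}(\Kk(X(G)))\cap\ker(\phi)^\perp$; Cuntz--Pimsner covariance of $(I_1,I_0)$ means precisely that $I_0(a)=I_1^{(1)}(\phi(a))$ for all $a\in J_{X(G)}$.

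For \eqref{it:kerphi} I would compute, for $a\in C^*_r(G_0)$, $\xi\in C_c(G_1)$ and $\gamma\in G_1$, that $(a\xi)(\gamma)=\sum_{\beta\in(G_1)_{s(\gamma)}}a(\gamma\beta^{-1})\xi(\beta)$. Since $G$ is \'etale I can localise $\xi$ on precompact bisections to isolate a single term, so that $a\in\ker\phi$ iff $a(\delta)=0$ for every $\delta\in G_0$ of the form $\gamma\beta^{-1}$ with $\gamma,\beta\in G_1$ and $s(\gamma)=s(\beta)$; these are exactly the $\delta\in G_0$ with $s(\delta)\in r(G_1)$. A short cocycle argument shows $r(G_1)$ is $G_0$-invariant (if $\delta\in G_0$ and $r(\delta)=r(\gamma)$ with $\gamma\in G_1$, then $s(\delta)=r(\delta^{-1}\gamma)\in r(G_1)$), so the source and range conditions coincide and, since $r$ is open, the description in \eqref{it:kerphi} follows. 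For \eqref{it:kerperp} I would first read off $\ker(\phi)^\perp$ from \eqref{it:kerphi}: an element annihilates $\ker\phi$ exactly when it vanishes on $\{\delta:s(\delta)\notin r(G_1)\}$, giving the middle set. To identify this with $\clsp\{fg^*:f,g\in C_c(G_1)\}$, the inclusion $\supseteq$ is immediate since $\supp(fg^*)\subseteq\{r,s\in r(G_1)\}$; for $\subseteq$ I would take $a\in C_c(G_0)$ supported over $r(G_1)$, use invariance to get $s(\supp a)\subseteq r(G_1)$, cover the precompact set $s(\supp a)$ by finitely many $r(V_j)$ with $V_j\subseteq G_1$ precompact open bisections, and choose $h_j',h_j''\in C_c(V_j)$ with $\sum_j h_j'(h_j'')^*=1$ on $s(\supp a)$ (possible because $V_jV_j^{-1}\subseteq\Gu$). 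Then $a=a\sum_j h_j'(h_j'')^*=\sum_j(ah_j')(h_j'')^*$ exhibits $a$ as a finite sum of the required form.

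Parts \eqref{it:Katideal} and \eqref{it:CPrep} then follow. A direct Hilbert-module computation gives $\phi(fg^*)=\theta_{f,g}$, so $\phi$ carries $\clsp\{fg^*\}=\ker(\phi)^\perp$ into $\Kk(X(G))$ while annihilating $\ker\phi$. Combined with the fibre computation that each $r^{-1}(u)\cap G_1$ is a single right $G_0$-orbit (for $\gamma,\gamma'\in G_1$ with $r(\gamma)=r(\gamma')$ one has $\gamma(\gamma^{-1}\gamma')=\gamma'$ with $\gamma^{-1}\gamma'\in G_0$), so that $X(G)$ is rank-one over $r(G_1)$ and $\psi$ identifies $\Kk(X(G))$ with $\clsp\{fg^*\}$, this yields $\phi(a)\in\Kk(X(G))$ for all $a$ and the computation $J_{X(G)}=\ker(\phi)^\perp\cap\phi^{-1}(\Kk(X(G)))=\clsp\{fg^*\}$. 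Finally, for $a=fg^*\in J_{X(G)}$, Corollary~\ref{corollary:groupoidcompacts} gives $I_1^{(1)}(\phi(a))=I_1^{(1)}(\theta_{f,g})=I_1(f)I_1(g)^*=fg^*=I_0(a)$, and linearity and continuity extend this to all of $J_{X(G)}$, which is exactly Cuntz--Pimsner covariance.

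The step I expect to be the main obstacle is \eqref{it:Katideal}, namely showing the left action lands in $\Kk(X(G))$ for \emph{every} $a$ rather than only for $a$ supported over $r(G_1)$. The delicate point is the behaviour near the topological boundary of the open invariant set $r(G_1)$, where a continuous function need not be approximable in the relevant quotient norm by elements of $\clsp\{fg^*\}$; here I would exploit the rank-one structure of $X(G)$ over $r(G_1)$ furnished by the single-orbit fibre computation to control $\phi$, together with the fact that the boundary of $r(G_1)$ is nowhere dense and so supports no nonzero continuous function. The identification of $J_{X(G)}$, and hence the covariance in \eqref{it:CPrep}, is insensitive to this boundary subtlety, since both $\ker(\phi)^\perp$ and $\phi^{-1}(\Kk(X(G)))$ intersect down to $\clsp\{fg^*\}$.
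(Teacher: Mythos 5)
Parts (\ref{it:kerphi}) and (\ref{it:CPrep}) of your argument follow essentially the same route as the paper, and your partition-of-unity argument for putting functions supported over $r(G_1)$ into $\lsp\{fg^* : f,g \in C_c(G_1)\}$ --- covering the compact set $s(\supp a)$ by ranges $r(V_j)$ of precompact open bisections $V_j \subseteq G_1$ and writing $a = \sum_j (ah_j)h_j^*$ --- is a clean, concrete alternative to the paper's argument that the sets $UV^{-1}$, with $U,V \subseteq G_1$ bisections, form a base for the topology on $s^{-1}(r(G_1)) \cap G_0$.

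The genuine gap is in part (\ref{it:Katideal}). Your deduction is: $\phi$ annihilates $\ker(\phi)$ and maps $\ker(\phi)^\perp = \clsp\{fg^*\}$ into $\Kk(X(G))$, hence $\phi(a) \in \Kk(X(G))$ for every $a$. This inference requires $C^*_r(G_0) = \ker(\phi) + \ker(\phi)^\perp$, which is not automatic: in a $C^*$-algebra the sum of an ideal and its annihilator is in general only an essential ideal (in $C([0,1])$ the ideal $C_0((0,1])$ has annihilator $\{0\}$). Supplying this decomposition is exactly the content of the paper's proof of (\ref{it:Katideal}): because $G_1 = c^{-1}(1)$ is clopen and $r$ is a local homeomorphism, the paper asserts that $r(G_1)$ --- and hence the invariant set $Y := s^{-1}(r(G_1)) \cap G_0$ --- is \emph{clopen}, so that for $a \in C_c(G_0)$ the pointwise products $a_1 = 1_Y a$ and $a_0 = a - a_1$ are again in $C_c(G_0)$; then $a_0 \in \ker(\phi)$, $a_1 \in C_c(Y) \subseteq \clsp\{fg^*\}$, and $\phi(a) = \phi(a_1) \in \Kk(X(G))$. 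Nothing in your proposal establishes, or even uses, this clopen-ness, and without it the claim is out of reach.

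Indeed, your proposed workaround for the ``boundary'' of $r(G_1)$ --- the rank-one structure of $X(G)$ plus the fact that a nowhere dense set carries no nonzero continuous function --- cannot possibly work, because if $r(G_1)$ really had nonempty boundary the statement would be false rather than merely delicate. Consider the group bundle $G = ([0,1]\times\{0\}) \cup ((0,1]\times\ZZ)$ with the subspace topology from $[0,1]\times\ZZ$ and cocycle $c(x,n) = n$: this satisfies all the standing hypotheses, yet $r(G_1) = (0,1]$ is open and not closed. Here $C^*_r(G_0) = C([0,1])$, $X(G) \cong C_0((0,1])$, $\ker(\phi) = \{0\}$, so $\ker(\phi)^\perp = C([0,1]) \neq C_0((0,1]) = \clsp\{fg^*\}$, and $\phi(1) = \id_{X(G)} \notin \Kk(X(G))$; both the first equality in (\ref{it:kerperp}) and the first claim of (\ref{it:Katideal}) fail. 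The same clopen-ness is also hidden in your ``read off'' of $\ker(\phi)^\perp$ in (\ref{it:kerperp}): to show that $a \in \ker(\phi)^\perp$ vanishes at $g$ with $s(g) \notin r(G_1)$, you must produce elements of $\ker(\phi)$ that are nonzero at $s(g)$, i.e.\ Urysohn functions supported in $\Gu \setminus r(G_1)$, and this needs $\Gu \setminus r(G_1)$ to be open. So the boundary behaviour you flag at the end is not a technicality to be smoothed over: it is precisely the point where clopen-ness of $r(G_1)$ must enter. (Your unease is, if anything, a fair question about the paper's own one-line justification of closedness, since local homeomorphisms are open maps but not closed maps, as the example above shows.) A correct proof must prove or assume that $r(G_1)$ is clopen --- for instance, $r(G_1) = \Gu$ whenever $c$ is strongly surjective --- rather than argue around its failure.
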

\begin{proof}
(\ref{it:kerphi}) Choose $a \in C^*_r(G_0)$; by \cite[Proposition~II.4.2]{Renault1980},
$a \in C_0(G)$. Suppose that $a \not\in \ker\phi$. Then there exists $\xi \in X(G)$ such
that $a\xi = \phi(a)\xi$ is nonzero, so there exist composable $\gamma, \gamma'$ with $a(\gamma) \not=
0$ and $\xi(\gamma') \not= 0$. We have $r(\gamma') \in r(G_1)$ by definition of $X(G)$,
and so $a \not\in C_0(\{g \in G_0 : s(g) \not\in r(G_1)\})$.

On the other hand, suppose that $a \not\in C_0(\{g \in G_0 : s(g) \not\in r(G_1)\})$.
Choose $g \in G_0$ such that $s(g) \in r(G_1)$ and $a(g) \not= 0$. Choose $\gamma \in
G_1$ with $r(\gamma) = s(g)$, fix a precompact open bisection $U \subseteq G_1$
containing $\gamma$ and use Urysohn's lemma to choose $\xi \in C_c(U) \subseteq C_c(G_1)$
such that $\xi(\gamma) = 1$. Then $(\phi(a)\xi)(g\gamma) = \sum_{\alpha\beta =
g\gamma}a(\alpha)\xi(\beta)$. Since $\xi$ is supported on the bisection $U$, and since
$\alpha\beta = g\gamma$ implies $s(\beta) = s(\gamma)$, if $\alpha\beta = g\gamma$ and
$a(\alpha)\xi(\beta) \not= 0$, we have $\beta = \gamma$ and then $\alpha = g$ by
cancellation. So $(\phi(a)\xi)(g\gamma) = a(g) \not= 0$ and in particular $a \not\in
\ker\phi$.

(\ref{it:kerperp}) Suppose that $a \in \ker(\phi)^\perp$. Fix $g \in G_0$ with $s(g)
\not\in \overline{r(G_1)}$. Then there is an open $U \subseteq \Gu \setminus r(G_1)$ with
$s(g) \in U$. By Urysohn, there exists $f \in C_c(U)$ with $f(s(g)) = 1$. Part~(1) gives
$f \in \ker(\phi)$ and so $a f = 0$, which gives $0 = (af)(g) = a(g)$. So $a \in C_0(\{g
\in G_0 : s(g) \in r(G_1)\})$. On the other hand, if $a \in C_0(\{g \in G_0 : s(g) \in
\overline{r(G_1)}\})$ and $f \in \ker(\phi)$, then part~(\ref{it:kerphi}) applied to $f$
shows that $s(\{\gamma : a(\gamma) \not= 0\}) \cap r(\{\gamma : f(\gamma) \not= 0\})$ has
empty interior. Hence $\supp(af)$ has empty interior, and since $af$ is continuous, it
follows that $af = 0$ showing that $a \in \ker(\phi)^\perp$.

The containment $\clsp\{f  g^* : f,g \in C_c(G_1)\} \subseteq C^*_r(G_0) \cap C_0(\{g \in
G_0 : s(g) \in r(G_1)\})$ is immediate from the definition of multiplication in $C_c(G)$.
Now suppose that $r(G_1)$ is closed (and hence clopen), and fix $f \in C_c(\{g \in G_0 :
s(g) \in r(G_1)\})$. Using that $G_1$ is open and that $s(\supp(f))$ is compact, we can
find finitely many precompact open bisections $V_i \subseteq G_1$ such that the $r(V_i)$
cover $s(\supp(f))$. Choose a partition of unity $a_i$ on $s(\supp(f))$ subordinate to
the $r(V_i)$, and define functions $b_i$ supported on the $V_i$ by $b_i(\gamma) :=
\sqrt{a_i(r(\gamma))}$ for $\gamma \in V_i$. Then $f = \sum_i (fb_i) b_i^* \in \lsp\{f
g^* : f,g \in C_c(G_1)\}$.

(\ref{it:Katideal}) For $f,g \in C_c(G_1)$ and $\xi \in C_c(G_1) \subseteq X(G)$, we have
$\phi(f  g^*)(\xi) = f  g^*  \xi = \theta_{f,g}(\xi)$. So $\phi(f  g^*) = \theta_{f,g}
\in \Kk(X(G))$. This combined with part~(\ref{it:kerperp}) shows that $\clsp\{f  g^* :
f,g \in C_c(G_1)\} \subseteq J_{X(G)}$. For the reverse containment, let
\[
K := \clsp\{f  g^* : f,g \in C_c(G_1)\}.
\]
Then $K$ is an ideal of $C^*_r(G_0)$ because $C_c(G_0) C_c(G_1), C_c(G_1)C_c(G_0)
\subseteq C_c(G_1)$. For $f,g \in C_c(G_1)$, the operator $\Theta_{f,g} \in \Kk(X(G))$ is
implemented by multiplication by $fg^*$. Since the norm on $X(G)$ is inherited from that
on $G$, we deduce that there is a well-defined isometric map from $K$ to $\Kk(X(G))$
given by $fg^* \mapsto \Theta_{f,g}$. Now suppose that $a \in J_{X(G)}$. Write $\phi(a) =
\lim_n \sum_i \Theta_{f_{n,i}, g_{n,i}}$ where the $f_{n,i}, g_{n,i}$ all belong to
$C_c(G_1)$. Since $fg^* \mapsto \Theta_{f,g}$ is isometric, the sequence $\big(\sum_i
f_{n,i} g_{n,i}^*\big)_n$ converges, say to $a_1 \in K \subseteq C^*_r(G_0)$. Since
$\phi(a) = \lim_n \phi(\sum_i f_{n,i} g_{n,i}^*)$, we deduce that $a - a_1 \in
\ker(\phi)$. Since $a$, and hence $a^*$ belongs to $\ker(\phi)^\perp$, we deduce that
$a^*(a - a_1) = 0$, so $a^*a = a^*a_1$. Hence $a^*a = a^*a_1 \in \clsp\{f  g^* : f,g \in
C_c(G_1)\}$. Using again that $\clsp\{f g^* : f,g \in C_c(G_1)\}$ is an ideal of $A$, we
deduce that $a \in \clsp\{f g^* : f,g \in C_c(G_1)\}$.

(\ref{it:CPrep}) Part~(\ref{it:Katideal}) shows that $\lsp\{f  g^* : f,g \in C_c(G_1)\}$
is dense in the Katsura ideal $J_{X(G)}$. For $f,g \in C_c(G_1)$ we have $I_0(fg^*) = f
 g^*$ because $I_0$ extends the inclusion $C_c(G_0) \subseteq C_c(G)$ by definition, and
we have just seen that $\phi(f g^*) = \theta_{f,g}$, giving $I_1^{(1)}(\phi(f  g^*)) =
I_1^{(1)}(\theta_{f,g}) = I_1(f)I_1(g)^* = f  g^*$ as well.
\end{proof}

\begin{rmk}
In the published version of this paper, we claimed in Lemma~\ref{lem:Katsura
ideal}(\ref{it:kerperp}) that $\ker(\phi)^\perp = C^*_r(G_0) \cap C_0(\{g \in G_0 : s(g)
\in r(G_1)\}) = \clsp\{f  g^* : f,g \in C_c(G_1)\}$, and in Lemma~\ref{lem:Katsura
ideal}(\ref{it:Katideal}) that $\phi_{X(G)}(a) \in \Kk(X(G))$ for all $a \in C^*_r(G_0)$,
but both statements are incorrect in general. The flaw in the proof was the assertion
that $r(G_1)$ is automatically closed.

To see the problem, consider the groupoid $G$ equal as a topological space to
\[
G = ((0,1) \times \ZZ) \cup (\{0,1\} \times \{0\}) \subseteq [0,1] \times \ZZ
\]
with $(x,m)$ and $(y,n)$ composable if and only if $x = y$, and composition given by
$(x,m)(x,n) = (x,m+n)$. This carries an unperforated cocycle $c : G \to \ZZ$ given by
$c(x,m) = m$. We have $G_1 = (0,1) \times \{1\}$ and $G_0 = [0,1] \times \{0\}$. This
gives $X(G) \cong C_0\big((0,1)\big)$ regarded as a Hilbert $C_0([0,1])$-bimodule under
the actions of pointwise multiplcation. Under this identification, we have
$\phi^{-1}(\Kk(C_0((0,1)))) = C_0((0,1)) \subsetneq C([0,1])$, and $\ker(\phi)^\perp =
C([0,1]) \supsetneq C_0((0,1)) = C_0(\{g \in G_0 : s(g) \in r(G_1)\})$.
\end{rmk}

\begin{notation}
Recall that if the pair $(\psi, \pi)$ is a Toeplitz representation of a
$C^*$-correspondence $X$ in a $C^*$-algebra $B$, then for $n \ge 2$ we write $\psi_n$ for
the continuous linear map from $X^{\otimes n}$ to $B$ such that $\psi(x_1 \otimes \cdots
\otimes x_n) = \psi(x_1) \cdots \psi(x_n)$ for all $x_i \in X$.
\end{notation}

\begin{theorem}\label{thm:main}
Suppose that $G$ is a second-countable locally compact Hausdorff \'etale groupoid, and
that $c : G \to \ZZ$ is an unperforated continuous cocycle as in
Notation~\ref{ntn:standing}. Let $G_0 := c^{-1}(0)$ and $G_1 = c^{-1}(1)$. Let $X(G)$ be
the Hilbert-$C^*_r(G_0)$-module completion of $C_c(G_1)$ described in Lemma~\ref{lem:X a
hilbert module}. The inclusion $I_0 : C_c(G_0) \to C_c(G)$ extends to an embedding $I_0 :
C^*_r(G_0) \to C^*_r(G)$ and the inclusion $I_1 : C_c(G_1) \to C_c(G)$ extends to a
linear map $I_1 : X(G) \to C^*_r(G)$. The pair $(I_1, I_0)$ is a Cuntz-Pimsner covariant
representation of $X(G)$, and the integrated form $I_1 \times I_0$ is an isomorphism of
$\Oo_{X(G)}$ onto $C^*_r(G)$.
\end{theorem}
\begin{proof}
Lemma~\ref{lem:A0=C*r} shows that $I_0$ extends to an embedding of $C^*_r(G_0)$. The map
$I_1$ extends to $X(G)$ by definition of the latter (see Lemma~\ref{lem:X a hilbert
module} and Notation~\ref{ntn:XGinc}). Lemma~\ref{lem:toeplitz rep} says that $(I_1,
I_0)$ is a representation of $X(G)$, and Lemma~\ref{lem:Katsura ideal}(\ref{it:CPrep})
says that this representation is Cuntz-Pimsner covariant.

The grading $c : C^*_r(G)\to \ZZ$ induces an action $\beta$ of $\TT$ on $C^*(G)$ such that
\[\textstyle
 \beta_z(a) = z^{c(a)} a\quad\text{ for all $z \in \mathbb{T}$ and $a \in \bigcup_n C_c(G_n)$.}
\]
For each $n$, let $C_c(G_1)^{\odot n}$ denote the dense subspace of $X(G)^{\otimes n}$
spanned by elementary tensors of the form $z_1 \otimes \cdots \otimes z_n$ with each $z_i
\in C_c(G_1)$. Lemma~\ref{lemma:densefunctions} implies that  for $n \ge 1$, the image
$I_n(C_c(G_1)^{\odot n})$ is dense in $C_c(G_n)$. This implies that $I_1 \times I_0$ is
equivariant for the gauge action $\alpha$ on $\Oo_{X(G)}$ and $\beta$. The
gauge-invariant uniqueness theorem \cite[Theorem~6.4]{K2004} implies that $I_1 \times
I_0$ is injective. Moreover, since each $I_n(C_c(G_1)^{\odot n})^*$ is dense in
$C_c(G_n)^*$, which is $C_c(G_{-n})$ by Lemma~\ref{lemma:densefunctions}, $I_1 \times
I_0$ has dense range, and so is surjective since it is a homomorphism between
$C^*$-algebras.
\end{proof}

\begin{corollary}
Suppose that $C^*(G_0) = C^*_r(G_0)$. Then $C^*(G) = C^*_r(G)$ and so
Theorem~\ref{thm:main} describes an isomorphism $\Oo_{X(G)} \cong C^*(G)$.
\end{corollary}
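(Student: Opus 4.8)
The plan is to run the whole construction of the preceding lemmas and Proposition~\ref{prp:main} a second time, but now inside the \emph{full} algebra $C^*(G)$, and then use the hypothesis to identify the two coefficient algebras. Write $B_0$ for the closure of $\{f \in C_c(G) : \supp(f) \subseteq G_0\}$ in $C^*(G)$ and $Y$ for the closure of $\{f \in C_c(G) : \supp(f) \subseteq G_1\}$ in $C^*(G)$. The arguments of Lemmas~\ref{lem:X a hilbert module}, \ref{lem:toeplitz rep} and \ref{lem:Katsura ideal} use only convolution identities among elements of $C_c(G)$ and topological facts about $G$ (clopenness of $G_0$ and of $r(G_1)$, openness of multiplication), none of which distinguishes the full algebra from the reduced one; so they carry over verbatim to show that $Y$ is a $C^*$-correspondence over $B_0$ with inner product $\langle\xi,\eta\rangle = \xi^*\eta$, and that the inclusions $J_0 : B_0 \hookrightarrow C^*(G)$ and $J_1 : Y \hookrightarrow C^*(G)$ form a Cuntz-Pimsner covariant Toeplitz representation. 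Since $C^*(G)$ also carries the gauge action $\beta$, the proof of Proposition~\ref{prp:main} then applies unchanged: $J_1 \times J_0$ is $\beta$-equivariant, is surjective because by Lemma~\ref{lemma:densefunctions} the products $J_1(f_1)\cdots J_1(f_n)$ with $f_i \in C_c(G_1)$ together with their adjoints span a dense subspace of each $C_c(G_n)$, and is injective by the gauge-invariant uniqueness theorem \cite[Theorem~6.4]{K2004} since $J_0$ is isometric. Hence $J_1 \times J_0 : \Oo_Y \to C^*(G)$ is an isomorphism.

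Next I would pin down $B_0$. The inclusion $C_c(G_0) \hookrightarrow C^*(G)$ is a $*$-homomorphism into a $C^*$-algebra, so by the universal property of $C^*(G_0)$ it extends to a surjection $C^*(G_0) \twoheadrightarrow B_0$ that is the identity on $C_c(G_0)$. Let $q : C^*(G) \to C^*_r(G)$ be the regular representation. Being the identity on $C_c(G)$ and continuous, $q$ carries $B_0$ onto the algebra $A_0$ of Lemma~\ref{lem:A0=C*r}, and post-composing with $I_0^{-1} : A_0 \to C^*_r(G_0)$ gives a map $B_0 \to C^*_r(G_0)$ fixing $C_c(G_0)$. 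Thus the composite $C^*(G_0) \to B_0 \to A_0 \cong C^*_r(G_0)$ is the identity on $C_c(G_0)$, hence is precisely the regular representation of $G_0$. \textbf{This is the crux of the argument, and where the hypothesis enters:} when $C^*(G_0) = C^*_r(G_0)$ the regular representation of $G_0$ is an isomorphism, so the composite above is injective; this forces the surjection $C^*(G_0) \twoheadrightarrow B_0$ to be injective, and likewise forces $q|_{B_0} : B_0 \to A_0$ to be an isomorphism. In other words, under the hypothesis the coefficient algebra of the full construction is canonically identified with that of the reduced one.

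To conclude I would feed this back through the gauge-invariant uniqueness theorem a second time, now applied to the composite $\rho := q \circ (J_1 \times J_0) : \Oo_Y \to C^*_r(G)$. Both $J_1 \times J_0$ and $q$ are $\beta$-equivariant, so $\rho$ is equivariant; and the restriction of $\rho$ to the coefficient algebra is $q|_{B_0}$, which is injective by the previous paragraph. Hence \cite[Theorem~6.4]{K2004} shows $\rho$ is injective. Since $J_1 \times J_0$ is surjective, $q$ is injective: if $q(a) = 0$, write $a = (J_1 \times J_0)(z)$, so that $\rho(z) = 0$, giving $z = 0$ and $a = 0$. As $q$ is also surjective, it is an isomorphism, i.e. $C^*(G) = C^*_r(G)$; combining this with Proposition~\ref{prp:main} yields $\Oo_{X(G)} \cong C^*_r(G) = C^*(G)$.

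The one genuinely delicate point is the identification of $B_0$ in the middle paragraph: \emph{a priori} $B_0$ is only a quotient of $C^*(G_0)$, and the entire force of the hypothesis is that it collapses this quotient. Everything else is bookkeeping, provided one keeps track that every comparison map in sight is the identity on the dense subalgebra $C_c(G_0)$ (respectively $C_c(G_1)$), which is exactly what makes the relevant triangles commute and lets the reduced-case lemmas be reused without modification.
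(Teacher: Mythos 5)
Your overall strategy (represent everything inside $C^*(G)$, use the hypothesis to identify the coefficient algebra, then deduce injectivity of the quotient map $q$) is in the spirit of the paper's proof, but your first paragraph contains a genuine gap. The claim that the arguments of Lemma~\ref{lem:Katsura ideal} ``carry over verbatim'' to the full completions $B_0$ and $Y$ is unjustified: parts (\ref{it:kerphi})--(\ref{it:Katideal}) of that lemma are proved by \emph{evaluating elements at points of $G$}, which is legitimate for $C^*_r(G_0)$ precisely because \cite[Proposition~II.4.2]{Renault1980} realises elements of the reduced algebra as functions in $C_0(G)$. Elements of $B_0 \subseteq C^*(G)$ are not functions; the natural map $B_0 \to C_0(G_0)$ factors through $q$, and its injectivity is essentially the statement you are trying to prove. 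Consequently you cannot compute $\ker\phi_Y$, $(\ker\phi_Y)^\perp$, or the Katsura ideal $J_Y \subseteq B_0$ by the paper's argument; what does transfer verbatim is only that $\clsp\{f g^* : f,g \in C_c(G_1)\} \subseteq J_Y$ and that covariance holds on this subspace, which is weaker than covariance on $J_Y$ and hence not enough to define $J_1 \times J_0$ on $\Oo_Y$, let alone apply the gauge-invariant uniqueness theorem. Note also that your paragraph 1 invokes no hypothesis on $G_0$ at all, so if it were correct it would establish a full-algebra analogue of Proposition~\ref{prp:main} unconditionally --- a statement the paper conspicuously avoids making, exactly because the structural lemmas are reduced-algebra facts.

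The repair is to reorder your argument so that the hypothesis enters \emph{first}, and this brings you back to the paper's proof. Your second paragraph is correct as it stands: under $C^*(G_0) = C^*_r(G_0)$ the composite $C^*(G_0) \twoheadrightarrow B_0 \to A_0 \cong C^*_r(G_0)$ is the regular representation, hence injective, so $q$ identifies $B_0$ with $A_0$ isometrically; by the $C^*$-identity ($\|\xi\|^2 = \|\xi^*\xi\|$ with $\xi^*\xi \in C_c(G_0)$) the full and reduced norms then also agree on $C_c(G_1)$, so $q$ identifies $Y$ with $X(G)$. After this identification there is nothing to recompute: the correspondence is the reduced one of Lemma~\ref{lem:X a hilbert module}, its Katsura ideal is the one already found in Lemma~\ref{lem:Katsura ideal}(\ref{it:Katideal}), and the inclusions $C^*_r(G_0), X(G) \hookrightarrow C^*(G)$ form a representation $(\tilde I_0, \tilde I_1)$ of $X(G)$ itself, whose covariance is checked on the dense subspace $\lsp\{f g^* : f,g\in C_c(G_1)\}$ by the same one-line computation as in Lemma~\ref{lem:Katsura ideal}(\ref{it:CPrep}). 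Your third paragraph then goes through (equivalently, as the paper argues, $q \circ (\tilde I_1 \times \tilde I_0) = I_1 \times I_0$ is injective by Proposition~\ref{prp:main} and $\tilde I_1 \times \tilde I_0$ is surjective by the density argument, so $q$ is injective). In short: your crux is the right crux, but it must be deployed before, not after, the transfer of the structural lemmas to $C^*(G)$.
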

\begin{proof}
We saw in Lemma~\ref{lem:A0=C*r} that the completion $A_0$ of $C_c(G_0)$ in $C^*_r(G)$
coincides with $C^*_r(G_0)$, and therefore, by hypothesis, with $C^*(G_0)$. So
$\|I_0(a)\| = \|a\|_{C^*(G_0)}$ for all $a \in C_c(G_0)$. For $\xi \in C_c(G_1)$, we have
\[
    \|\xi^*  \xi\|_{C^*(G)} \ge \|\xi^*  \xi\|_{C^*_r(G)}.
\]
The function $\xi^*  \xi$ belongs to $C_c(G_0)$ and so $\|\xi^*  \xi\|_{C^*_r(G)} =
\|\xi^*  \xi\|_{C^*_r(G_0)}$ by Lemma~\ref{lem:A0=C*r}, and this last is equal to $\|\xi
^*  \xi\|_{C^*(G_0)}$ by hypothesis. Since the $I$-norm on $C_c(G)$ agrees with the
$I$-norm on $C_c(G_0)$, the canonical inclusion $C_c(G_0) \hookrightarrow C_c(G)$ is
$I$-norm bounded, and so extends to a $C^*$-homomorphism $C^*(G_0) \to C^*(G)$, which
forces $\|\xi^*  \xi\|_{C^*(G_0)} \ge \|\xi^* \xi\|_{C^*(G)}$. So we have equality
throughout, giving
\[
\|\xi^*  \xi\|_{C^*(G)}
    = \|\xi^*  \xi\|_{C^*_r(G_0)}
    = \|\langle I_1(\xi), I_1(\xi)\rangle_{C^*_r(G_0)}\|.
\]
In particular, $\|\xi\|_{C^*(G)} = \|I_1(\xi)\|$. We deduce that $(I_0, I_1)$ extends to
a Cuntz-Pimsner covariant Toeplitz representation $(\tilde{I}_0, \tilde{I}_1)$ of $X(G)$
in $C^*(G)$. Let $q : C^*(G) \to C^*_r(G)$ denote the quotient map. Then $q \circ
(\tilde{I}_1 \times \tilde{I}_0) = I_1 \times I_0$, which is injective by
Theorem~\ref{thm:main}. Since $\tilde{I}_1 \times \tilde{I}_0$ is surjective, we deduce
that $q$ is also injective.
\end{proof}

\begin{rmk}
If $G_0$ is an amenable groupoid (we don't have to specify which flavour because
topological amenability and measurewise amenability coincide for second-countable locally
compact Hausdorff \'etale groupoids \cite[Theorem~3.3.7]{A-DR}), then
\cite[Proposition~9.3]{Spielberg} shows that $G$ is amenable as well, and then the
preceding corollary follows. So the corollary has independent content only if $G_0$ is
not amenable but nevertheless has identical full and reduced norms.
\end{rmk}

Let
\begin{align*}
L_0 = \{a \in M_2(C_c(G)) : {}&a_{11} \in J_{X(G)}, a_{22} \in C_c(G_0),\\
    &a_{12} \in C_c(G_1) \text{ and } a_{21} \in C_c(G_{-1})\}.
\end{align*}
Then the completion of $L_0$ in the norm induced by the reduced norm on $C^*_r(G)$ is
\[
L = \left(\begin{matrix}
    J_{X(G)} & X(G) \\
    X(G)^* & C^*_r(G_0)
\end{matrix}\right).
\]
Part (\ref{it:Katideal}) of Lemma~\ref{lem:Katsura ideal}  shows that $L$ is a
$C^*$-subalgebra of $M_2(C^*_r(G))$, and that the corners $\big(\begin{smallmatrix}
J_{X(G)} & 0\\ 0 & 0\end{smallmatrix}\big)$ and $\big(\begin{smallmatrix} 0 & 0\\ 0 &
C^*_r(G_0)\end{smallmatrix}\big)$ are full. So the inclusion maps $i^{11} : J_{X(G)} \to L$
into the top-left corner and $i^{22} : C^*_r(G_0) \to L$ into the bottom-right corner
determine isomorphisms $i^{11}_* : K_*(J_{X(G)}) \to K_*(L)$ and $i^{22}_* :
K_*(C^*(G_0)) \to K_*(L)$. The composite $[X] := (i^{22}_*)^{-1} \circ i^{11}_* :
K_*(J_{X(G)}) \to K_*(C^*_r(G_0))$ is the map appearing in \cite[Theorem~8.6]{K2004} (see
\cite[Remark~B4]{K2004}). The inclusion map $\iota : J_{X(G)} \hookrightarrow C^*_r(G_0)$ also
induces a homomorphism $\iota_* : K_*(J_{X(G)}) \to K_*(C^*_r(G_0))$, and Theorem~8.6 of
\cite{K2004} gives the following.

\begin{corollary}
There is an exact sequence in $K$-theory as follows:
\[
\begin{tikzpicture}[xscale=1.5, >=stealth]
    \node (tl) at (0,2) {$K_0(J_{X(G)})$};
    \node (tc) at (3,2) {$K_0(C^*_r(G_0))$};
    \node (tr) at (6,2) {$K_0(C^*_r(G))$};
    \node (br) at (6,0) {$K_1(J_{X(G)})$};
    \node (bc) at (3,0) {$K_1(C^*_r(G_0))$};
    \node (bl) at (0,0) {$K_1(C^*_r(G))$};
    \draw[->] (tl)--(tc) node[pos=0.5, above] {\small$\iota_* - [X]$};
    \draw[->] (tc)--(tr) node[pos=0.5, above] {\small$(I_0)_*$};
    \draw[->] (tr)--(br);
    \draw[->] (br)--(bc) node[pos=0.5, above] {\small$\iota_* - [X]$};
    \draw[->] (bc)--(bl) node[pos=0.5, above] {\small$(I_0)_*$};
    \draw[->] (bl)--(tl);
\end{tikzpicture}
\]
\end{corollary}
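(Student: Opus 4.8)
The plan is to apply \cite[Theorem~8.6]{K2004} to the correspondence $X(G)$ over $A := C^*_r(G_0)$ and then transcribe the resulting sequence through the identifications established above. Katsura's theorem associates to a correspondence $X$ over $A$ with Katsura ideal $J_X$ and universal covariant representation $(k_X, k_A)$ a cyclic six-term exact sequence in $K$-theory whose horizontal maps $K_*(J_X) \to K_*(A)$ are $\iota_* - [X]$---the difference between the map induced by the inclusion $J_X \hookrightarrow A$ and the map $[X]$ induced by the correspondence---and whose maps $K_*(A) \to K_*(\Oo_X)$ are induced by $k_A$.

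Specialising to $A = C^*_r(G_0)$ and $X = X(G)$, the ideal $J_X$ is the Katsura ideal $J_{X(G)}$ computed in part~(\ref{it:Katideal}) of Lemma~\ref{lem:Katsura ideal}. The map $[X] : K_*(J_{X(G)}) \to K_*(C^*_r(G_0))$ in Katsura's sequence is, by \cite[Remark~B4]{K2004}, precisely the composite $(i^{22}_*)^{-1} \circ i^{11}_*$ built from the full corners of the algebra $L$ in the discussion preceding this corollary. This fixes the left-hand maps of the diagram as $\iota_* - [X]$.

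The point needing care---and the only real step---is the identification of the maps $K_*(C^*_r(G_0)) \to K_*(C^*_r(G))$ as $(I_0)_*$. By Proposition~\ref{prp:main}, the integrated form $I_1 \times I_0$ is an isomorphism $\Oo_{X(G)} \to C^*_r(G)$, and I would use it to replace $K_*(\Oo_{X(G)})$ by $K_*(C^*_r(G))$ throughout the sequence. Since $I_1 \times I_0$ is the integrated form of the covariant representation $(I_1, I_0)$, it satisfies $(I_1 \times I_0) \circ k_A = I_0$ on $A = C^*_r(G_0)$; hence the map induced on $K$-theory by $k_A$, read through the isomorphism, is exactly $(I_0)_*$.

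Substituting these identifications into Katsura's sequence yields the asserted diagram, with the vertical connecting maps and the exactness at each term carried over verbatim because we are merely relabelling the terms of an already-exact sequence via an isomorphism of $C^*$-algebras. I do not expect a substantive obstacle: the genuine work---realising $\Oo_{X(G)}$ as $C^*_r(G)$, computing $J_{X(G)}$, and exhibiting $[X]$ through the corners of $L$---is already done, so this corollary is a direct transcription of \cite[Theorem~8.6]{K2004}.
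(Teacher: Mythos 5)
Your proposal is correct and follows exactly the paper's route: the paper likewise obtains the sequence by applying \cite[Theorem~8.6]{K2004} to $X(G)$ over $C^*_r(G_0)$, using the identification of $[X]$ via the full corners of $L$ and \cite[Remark~B4]{K2004}, and transporting the sequence through the isomorphism $I_1 \times I_0 : \Oo_{X(G)} \to C^*_r(G)$ of Proposition~\ref{prp:main}. Your explicit observation that $(I_1 \times I_0) \circ k_A = I_0$, which justifies labelling the maps $K_*(C^*_r(G_0)) \to K_*(C^*_r(G))$ as $(I_0)_*$, is the one step the paper leaves implicit, and you have it right.
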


\end{document}